\theoremstyle{plain}
\newtheorem{rem}{Remark}
\newtheorem{definition}{Definition}
\newtheorem{thm}[definition]{Theorem}
\newtheorem{lem}[definition]{Lemma}
\newtheorem{rei}[definition]{Example}
\newcommand{\Mat}{{\rm Mat}}
\newcommand{\Det}{{\rm Det}}
\newcommand{\sgn}{{\rm sgn}}
\begin{document}
\title[]{Proof of some properties of transfer using noncommutative determinants}
\author[N. Yamaguchi]{Naoya Yamaguchi}
\date{\today}
\keywords{transfer; noncommutative determinant; group algebra.}
\subjclass[2010]{Primary 20C05; Secondary 11H60; 15A15.}

\maketitle

\begin{abstract}
A transfer is a group homomorphism from a group to an abelian quotient group of a subgroup of finite index. 
In this paper, we give a natural interpretation of the transfers in group theory in terms of noncommutative determinants. 
\end{abstract}

\section{\bf{Introduction}}
A transfer is defined by Issai Schur \cite{Schur} as a group homomorphism from a group to an abelian quotient group of a subgroup of the group. 
In finite group theory, transfers play an important role in transfer theorems. 
Transfer theorems include, for example, Alperin's theorem \cite[Theorem~4.2]{ALPERIN}, 
Burnside's theorem \cite[Hauptsatz~4.2.6]{Huppert}, 
and Hall-Wielandt's theorem \cite[Theorem~14.4.2]{HALL}.

On the other hand, Eduard Study defined the determinant of a quaternionic matrix \cite{Aslaksen}. 
The Study determinant uses a regular representation from $\Mat(n, \mathbb{H})$ to $\Mat(2n, \mathbb{C})$, 
where $\mathbb{H}$ is the quaternions. 
Similarly, we define a noncommutative determinant. 
It is similar to the Dieudonn\'e determinant \cite{Artin}.

T\^oru Umeda suggested that a transfer can be derived as a noncommutative determinant \cite[Footnote 7]{Umeda}. 
In this paper, we develop his ideas in order to explain the properties of the transfers by using noncommutative determinants. 
As a result, we give a natural interpretation of the transfers in group theory in terms of noncommutative determinants.

Let $G$ be a group, 
$H$ a subgroup of $G$ of finite index, 
$K$ a normal subgroup of $H$, 
and the quotient group $H/K$ of $K$ in $H$ an abelian group. 
The transfer of $G$ into $H/K$ is a group homomorphism $V_{G \rightarrow H/K} : G \rightarrow H/K$. 
The definition of the transfer $V_{G \rightarrow H/K}$ uses the left (or right) coset representatives of $H$ in $G$. 
We can show that a transfer has the following properties. 
\begin{enumerate}
\item A transfer is a group homomorphism from $G$ to $H/K$ \cite[Theorem~$3.1$]{Mayer}. 
\item A transfer is invariant under a change of coset representatives \cite[Proposition~$3.1$]{Mayer}. 
\item A transfer by left coset representatives equals a transfer by right coset representatives \cite[Section~$3.1$]{Mayer}. 
\end{enumerate}

Let $R$ be a commutative ring with unity and 
$RG$ the group algebra of $G$ over $R$ whose elements are all possible finite sums of the form $\sum_{g \in G} x_{g} g, x_{g} \in R$. 
The noncommutative determinant uses a left (or right) regular representation from $RG$ to $\Mat(m, RH)$, 
where $m$ is the index of $H$ in $G$. 
Our main result is the following.

\begin{thm}\label{thm:0}
We can regard the transfer $V_{G \rightarrow H/K}$ as the noncommutative determinant $\Det{}$. 
That is, we have 
$$
\Det{(g)} = \sgn(g) V_{G \rightarrow H/K}(g) \quad (g \in G)
$$
where the map $\sgn : G \rightarrow \{-1, 1\}$ is a group homomorphism and $1$ is the unit element of $R$. 
In addition, we can show that the above properties of the transfer (1), (2), and (3) by the following properties of the noncommutative determinant $\Det{}$. 
\begin{enumerate}
\item[$(1')$] The determinant is a multiplicative map from $RG$ to $R(H/K)$. 
\item[$(2')$] The determinant is invariant under a change of a regular representation. 
\item[$(3')$] The determinant of any left regular representation equals the determinant of any right regular representation. 
\end{enumerate}
\end{thm}

\section{\bf{Definition of the transfer}}
Here, we define the left and right transfer of $G$ into $H/K$. 

Let $G = t_{1} H \cup t_{2} H \cup \cdots \cup t_{m} H$. 
That is, we take a complete set $\{ t_{1}, t_{2}, \ldots, t_{m} \}$ of left coset representatives of $H$ in $G$. 
We define $\overline{g} = t_{i}$ for all $g \in t_{i}H$. 
The definition of the left transfer is the following. 

\begin{definition}[Left transfer {\cite[Definition~$3.3$]{Mayer}}]\label{def:2.1.1}
We define the map $V_{G \rightarrow H/K} : G \rightarrow H/K$ by
$$
V_{G \rightarrow H/K}(g) = \prod_{i = 1}^{m} \left\{ \left(\overline{g t_{i}} \right)^{-1} g t_{i} \right\} K. 
$$
We call the map $V_{G \rightarrow H/K}$ the left transfer of $G$ into $H/K$. 
\end{definition} 

Next, we define the right transfer of $G$ into $H/K$. 

Let $G = H u_{1} \cup H u_{2} \cup \cdots \cup H u_{m}$. 
That is, we take a complete set $\{ u_{1}, u_{2}, \ldots, u_{m} \}$ of right coset representatives of $H$ in $G$. 
We define $\widetilde{g} = u_{i}$ for all $g \in Hu_{i}$. 
The definition of the right transfer is the following. 

\begin{definition}[Right transfer {\cite[Definition~$3.3$]{Mayer}}]\label{def:2.1.2}
We define the map $\widetilde{V}_{G \rightarrow H/K} : G \rightarrow H/K$ by
$$
\widetilde{V}_{G \rightarrow H/K}(g) = \prod_{i = 1}^{m} \left\{ u_{i} g \left( \widetilde{u_{i} g} \right)^{-1} \right\} K. 
$$
We call the map $\widetilde{V}_{G \rightarrow H/K}$ the right transfer of $G$ into $H/K$. 
\end{definition} 

The definitions of the left and right transfers use the coset representatives of $H$ in $G$. 
But, we can show that the left and right transfers are invariant under a change of coset representatives. 
Furthermore, we can show that a transfer is a group homomorphism from $G$ to $H/K$ and 
a transfer by left coset representatives equals a transfer by right coset representatives.

\section{\bf{Definition of the noncommutative determinant}}
Here, we define the noncommutative determinant. 

First, we define the left regular representation of $RG$. 
We take a complete set $T = \{ t_{1}, t_{2}, \ldots, t_{m} \}$ of left coset representatives of $H$ in $G$. 
Then, for all $\alpha \in RG$, there exists a unique $L_{T}(\alpha) \in \Mat(m, RH)$ such that 
$$
\alpha (t_{1} \quad t_{2} \quad \cdots \quad t_{m}) = (t_{1} \quad t_{2} \quad \cdots \quad t_{m}) L_{T}(\alpha), 
$$
where we regard $\alpha (t_{1} \cdots \ t_{m})$ as scalar multiplication $(\alpha t_{1} \cdots \alpha t_{m})$. 
The $R$-algebra homomorphism $L_{T} : RG \ni \alpha \mapsto L_{T}(\alpha) \in \Mat(m, RH)$ is called the left regular representation with respect to $T$. 

Let $T' = \{ t'_{1}, t'_{2}, \ldots, t'_{m} \}$ be an another complete set of left coset representatives of $H$ in $G$. 
Then, there exists $P \in \Mat(m, RH)$ such that $L_{T} = P^{-1} L_{T'} P$.

\begin{rei}\label{rei:2.1.3}
Let $G = \mathbb{Z}/ 2 \mathbb{Z} = \{ \overline{0}, \overline{1} \}$, 
$H = \{ \overline{0} \}$, and $\alpha = x \overline{0} + y \overline{1} \in RG$. 
Then, we have 
\begin{align*}
\alpha (\overline{0} \quad \overline{1}) = (\overline{0} \quad \overline{1}) 
\begin{bmatrix}
x \overline{0} & y \overline{0} \\ 
y \overline{0} & x \overline{0} 
\end{bmatrix}. 
\end{align*}
\end{rei}

To obtain an expression for $L_{T}$, we define the indicator function $\dot{\chi}$ by 
\begin{align*}
\dot{\chi}(g) = 
\begin{cases}
1 & g \in H, \\ 
0 & g \notin H 
\end{cases}
\end{align*}
for all $g \in G$.

\begin{lem}\label{lem:2.1.4}
Let $\alpha = \sum_{g \in G} x_{g} g$. 
Then, we have 
$$
L_{T}(\alpha)_{i j} = \sum_{g \in G} \dot{\chi} \left( t_{i}^{-1} g t_{j} \right) x_{g} t_{i}^{-1} g t_{j}. 
$$
\end{lem}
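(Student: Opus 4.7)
The plan is to reduce the claim to a coefficient comparison in the defining equation $\alpha(t_1 \: t_2 \: \cdots \: t_m) = (t_1 \: t_2 \: \cdots \: t_m) L_T(\alpha)$. Reading off the $j$-th column, this is equivalent to the system of identities
\[
\alpha t_j = \sum_{i=1}^m t_i\, L_T(\alpha)_{ij} \qquad (1 \le j \le m),
\]
with each $L_T(\alpha)_{ij} \in RH$. Since $H$ is a subgroup and $t_1,\ldots,t_m$ are left coset representatives, every element of $RG$ admits a unique expression of the form $\sum_i t_i \beta_i$ with $\beta_i \in RH$; so identifying $L_T(\alpha)_{ij}$ reduces to writing $\alpha t_j$ in this normal form and reading off the $t_i$-component.

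To do this, I would first expand $\alpha t_j = \sum_{g \in G} x_g\, g t_j$. For each fixed $g$ and $j$, there is a unique index $i = i(g,j)$ such that $g t_j \in t_i H$, characterized by $t_i^{-1} g t_j \in H$; equivalently, $\dot{\chi}(t_k^{-1} g t_j) = \delta_{k, i(g,j)}$ as $k$ ranges over $1,\ldots,m$. Using this, I rewrite
\[
g t_j = t_{i(g,j)}\bigl(t_{i(g,j)}^{-1} g t_j\bigr) = \sum_{i=1}^m \dot{\chi}\bigl(t_i^{-1} g t_j\bigr)\, t_i\, t_i^{-1} g t_j,
\]
where every nonzero summand on the right has $t_i^{-1} g t_j \in H$, so the bracketed factor legitimately lies in $RH$.

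Plugging this back into $\alpha t_j$ and interchanging the finite sums over $i$ and $g$ gives
\[
\alpha t_j = \sum_{i=1}^m t_i \left( \sum_{g \in G} \dot{\chi}\bigl(t_i^{-1} g t_j\bigr)\, x_g\, t_i^{-1} g t_j \right),
\]
with each inner parenthesis an element of $RH$. Comparing with the defining identity and invoking uniqueness of the normal form yields the claimed formula for $L_T(\alpha)_{ij}$.

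The only point requiring care is the manipulation in the last display: one must check that the reorganized sum makes sense (it does because $\alpha$ has finite support) and that the inner sum genuinely lies in $RH$ (it does because $\dot{\chi}$ filters out exactly those $g$ for which $t_i^{-1} g t_j \in H$). Apart from this bookkeeping, the argument is a direct unwinding of Definition~\ref{def:3.1.1}, so I do not anticipate a substantive obstacle.
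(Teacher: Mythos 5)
Your proposal is correct and follows essentially the same route as the paper: both verify the defining identity $\alpha(t_1 \: \cdots \: t_m) = (t_1 \: \cdots \: t_m)L_T(\alpha)$ column by column, using $\dot{\chi}$ to pick out the unique $i$ with $g t_j \in t_i H$, and then invoke the uniqueness of $L_T(\alpha)$ from Definition~\ref{def:3.1.1}. Your write-up just makes explicit the normal-form uniqueness step that the paper leaves implicit.
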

\begin{proof}
We have 
\begin{align*}
&( t_{1} \quad t_{2} \quad \cdots \quad t_{m}) \left( \sum_{g \in G} \dot{\chi} \left( t_{i}^{-1} g t_{j} \right) x_{g} t_{i}^{-1} g t_{j} \right)_{1 \leq i \leq m, 1 \leq j \leq m} \\ 
&= \left( 
\sum_{i = 1}^{m} \sum_{g \in G} \dot{\chi} ( t_{i}^{-1} g t_{1} ) x_{g} g t_{1} \quad 
\cdots \quad 
\sum_{i = 1}^{m} \sum_{g \in G} \dot{\chi} ( t_{i}^{-1} g t_{m} ) x_{g} g t_{m} 
\right) \\ 
&= \left( \sum_{g \in G} x_{g} g \right) ( t_{1} \quad t_{2} \quad \cdots \quad t_{m}). 
\end{align*}
This completes the proof. 
\end{proof}

From Lemma~$\ref{lem:2.1.4}$, we have
\begin{align*}
L_{T}(g)_{i j} &= \dot{\chi} \left( t_{i}^{-1} g t_{j} \right) t_{i}^{-1} g t_{j} \\ 
&= 
\begin{cases}
t_{i}^{-1} g t_{j} & t_{i}^{-1} g t_{j} \in H, \\ 
0 & t_{i}^{-1} g t_{j} \not\in H. 
\end{cases}
\end{align*}
From $t_{i}^{-1} g t_{j} \in H$ if and only if $\overline{g t_{j}} = t_{i}$, we have 
\begin{align*}
L_{T}(g)_{i j} = 
\begin{cases}
\left( \overline{g t_{j}} \right)^{-1} g t_{j} & t_{i}^{-1} g t_{j} \in H, \\ 
0 & t_{i}^{-1} g t_{j} \not\in H. 
\end{cases}
\end{align*}

As for the definition of the noncommutative determinant, 
let $\psi : \Mat(m, RH) \rightarrow \Mat \left( m, R \left( H/K \right) \right)$ be an $R$-linear map such that 
$$
\psi \left( h E_{ij} \right) = (hK) E_{ij}
$$
for all $h \in H$ and $1 \leq i, j \leq m$, where $E_{ij}$ is the matrix with $1$ in the $(i \; j)$ entry and $0$ otherwise. 
Obviously, $\psi$ is an $R$-algebra homomorphism. 
The definition of the noncommutative determinant is the following. 

\begin{definition}\label{def:2.1.5}
We define the map $\Det : RG \rightarrow R \left( H/K \right)$ by 
$$
\Det = \det{} \circ \psi \circ L_{T}. 
$$
\end{definition}

Since there is $P$ such that $L_{T} = P^{-1} L_{T'} P$, we have 
\begin{align*}
\Det 
&= \det{} \circ \psi \circ L_{T} \\ 
&= \det{} \circ \psi \circ L_{T'}. 
\end{align*}
Thus, the determinant is invariant under a change of left regular representations, 
so the determinant $\Det$ is well-defined. 
If $K$ is the commutator subgroup of $H$, 
the determinant is similar to the Dieudonn\'e determinant.

Obviously, the map $\Det$ is a homomorphism. 
That is, $\Det(\alpha \beta) = \Det(\alpha) \Det(\beta)$ for all $\alpha, \beta \in RG$. 
Therefore, we obtain properties~$(1')$ and $(2')$.

\begin{rem}\label{}
In general, that $\alpha \in RG$ is invertible is not equivalent to that $\Det{(\alpha)} \in R(H/K)$ is invertible. 
For example, 
let $R = \mathbb{C}$, 
$\mathbb{Z}/2\mathbb{Z} = \left\{ \overline{0}, \overline{1} \right\}$ be the group of order two, 
$S_{3}$ be the symmetric group of degree three, 
$G = \mathbb{Z}/2\mathbb{Z} \times S_{3}$, 
$H = S_{3}$, 
and $K = \left[H, H \right]$ the commutator subgroup of $H$. 
 Then 
$\alpha = \left(\overline{0}, e \right) + \left(\overline{0}, (1 2 3) \right) + \left( \overline{0}, (1 3 2) \right)$ is not invertible, 
where $e$ is the unit element of $H$. 
But, $\Det(\alpha) = 9 K$ is invertible. 
\end{rem}

\section{\bf{Proof of the properties}}
Here, we prove the transfer properties by using the noncommutative determinant's properties. 

For all $g \in G$ and for all $t \in T$, there exists a unique $t_{j} \in T$ such that $t_{i}^{-1} g t_{j} \in H$. 
Therefore, there exists $\sgn(g) \in \{ -1, 1 \}$ such that 
\begin{align*}
\Det(g) 
&= \det{\left( \psi \left( L_{T}(g) \right) \right)} \\ 
&= \sgn(g) \prod_{i = 1}^{m} \left\{ \left( \overline{g t_{i}} \right)^{-1} g t_{i} \right\} K \\ 
&= \sgn(g) V_{G \rightarrow H/K}(g). 
\end{align*}

Thus, we have 
\begin{align*}
\sgn(g h) V_{G \rightarrow H/K}(g h) 
&= \Det(g h) \\ 
&= \Det(g) \Det(h) \\ 
&= \sgn(g) \sgn(h) V_{G \rightarrow H/K}(g) V_{G \rightarrow H/K}(h). 
\end{align*}

Hence, we obtain 
\begin{align*}
\sgn(g h) &= \sgn(g) \sgn(h), \\ 
V_{G \rightarrow H/K}(g h) &= V_{G \rightarrow H/K}(g) V_{G \rightarrow H/K}(h). 
\end{align*}

Therefore, from property~$(1')$ that $\Det$ is a homomorphism, 
the left transfer $V_{G \rightarrow H/K}$ is a group homomorphism (Assuming, that is, $R = \mathbb{F}_{2}$, and we do not consider the signature).

Next, we show that the left transfer is invariant under a change of coset representatives by using property~$(2')$ that the determinant is invariant under a change of regular representations. 
That is, we show that 
$$
\prod_{i = 1}^{m} \left\{ \left( \overline{g t_{i}} \right)^{-1} g t_{i} \right\} K = \prod_{i = 1}^{m} \left\{ \left( \overline{\overline{g t'_{i}}} \right)^{-1} g t'_{i} \right\} K 
$$
where we define $\overline{\overline{g}} = t'_{i}$ for all $g \in t'_{i} H$.

From property~$(2')$, 
there exists $\sgn'(g) \in \{ -1, 1 \}$ such that 
\begin{align*}
\prod_{i = 1}^{m} \left\{ \left( \overline{g t_{i}} \right)^{-1} g t_{i} \right\} K 
&= \sgn(g) \Det(g) \\ 
&= \sgn(g) \sgn'(g) \prod_{i = 1}^{m} \left\{ \left( \overline{\overline{g t'_{i}}} \right)^{-1} g t'_{i} \right\} K. 
\end{align*}
Therefore, we have $\sgn(g) \sgn'(g) = 1$ and 
$$
\prod_{i = 1}^{m} \left\{ \left( \overline{g t_{i}} \right)^{-1} g t_{i} \right\} K = \prod_{i = 1}^{m} \left\{ \left( \overline{\overline{g t'_{i}}} \right)^{-1} g t'_{i} \right\} K. 
$$
Hence, the left transfer is invariant under a change of coset representatives.

Now let us prove property~$(3)$ that $V_{G \rightarrow H/K} = \widetilde{V}_{G \rightarrow H/K}$ from property~$(3')$ that any left regular representation is equivalent to any right regular representation.

Let $G = Hu_{1} \cup Hu_{2} \cup \cdots \cup Hu_{m}$. 
That is, we take a complete set $U= \{ u_{1}, u_{2}, \ldots, u_{m} \}$ of right coset representatives of $H$ in $G$. 
Then, for all $\alpha \in RG$, there exists $R_{U}(\alpha) \in \Mat(m, RH)$ such that 
\begin{align*}
\begin{pmatrix} 
u_{1} \\ 
u_{2} \\ 
\vdots \\ 
u_{m}
\end{pmatrix} 
\alpha 
= R_{U}(\alpha) 
\begin{pmatrix}
u_{1} \\ 
u_{2} \\ 
\vdots \\ 
u_{m}
\end{pmatrix}. 
\end{align*}
The $R$-algebra homomorphism $R_{U} : RG \ni \alpha \mapsto R_{U}(\alpha) \in \Mat(m, RH)$ is called the right regular representation. 

The same as the left transfer, 
we can show that the following lemma. 

\begin{lem}\label{lem:2.1.7}
Let $\alpha = \sum_{g \in G} x_{g} g$. Then, we have 
$$
R_{U}(\alpha)_{i j} = \sum_{g \in G} \dot{\chi} (u_{i} g u_{j}^{-1}) x_{g} u_{i} g u_{j}^{-1}. 
$$
\end{lem}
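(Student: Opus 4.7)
The plan is to mirror the proof of Lemma~\ref{lem:2.1.4}, simply swapping the roles of left and right cosets. Concretely, I would write down the candidate matrix $M \in \Mat(m, RH)$ whose $(i,j)$ entry is
$$
M_{ij} = \sum_{g \in G} \dot{\chi}(u_{i} g u_{j}^{-1}) \, x_{g} u_{i} g u_{j}^{-1},
$$
observe that each $M_{ij}$ indeed lies in $RH$ because $\dot{\chi}$ kills every summand for which $u_{i} g u_{j}^{-1} \notin H$, and then verify directly that $M$ satisfies the defining relation of Definition~\ref{def:2.1.6}. Since $\{u_{1}, u_{2}, \ldots, u_{m}\}$ is a free right $RH$-basis of $RG$, the matrix with this defining property is unique, so this identification forces $M = R_{U}(\alpha)$.

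The main computation is to expand the $i$-th entry of the column vector $M (u_{1}, u_{2}, \ldots, u_{m})^{T}$:
\begin{align*}
\sum_{j = 1}^{m} M_{ij} u_{j}
&= \sum_{j = 1}^{m} \sum_{g \in G} \dot{\chi}(u_{i} g u_{j}^{-1}) x_{g} u_{i} g u_{j}^{-1} u_{j} \\
&= \sum_{j = 1}^{m} \sum_{g \in G} \dot{\chi}(u_{i} g u_{j}^{-1}) x_{g} u_{i} g.
\end{align*}
For each fixed $g \in G$, the element $u_{i} g$ belongs to exactly one right coset $Hu_{j(g)}$, so the factor $\dot{\chi}(u_{i} g u_{j}^{-1})$ equals $1$ if $j = j(g)$ and $0$ otherwise. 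Hence the double sum collapses to $\sum_{g \in G} x_{g} u_{i} g = u_{i} \alpha$, which is precisely the $i$-th entry of the left-hand side of Definition~\ref{def:2.1.6}.

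This is a purely mechanical verification and there is no real obstacle; the whole argument turns on the observation that $\dot{\chi}$ is exactly the device needed both to keep the entries of $M$ inside $RH$ and to collapse the sum over $j$ to a single surviving term. The only point deserving any care is the appeal to uniqueness of the matrix satisfying the relation in Definition~\ref{def:2.1.6}, which in turn rests on $\{u_{1}, u_{2}, \ldots, u_{m}\}$ being a free right $RH$-basis of $RG$ — the exact right-handed analogue of the freeness used implicitly in Definition~\ref{def:3.1.1}.
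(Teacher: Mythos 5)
Your proof is correct and is essentially the argument the paper intends: the paper omits the proof of Lemma~\ref{lem:2.1.7}, remarking only that it is ``the same as'' Lemma~\ref{lem:2.1.4}, whose proof is exactly this direct verification that the candidate matrix satisfies the defining relation, with the sum collapsing because $u_i g$ lies in a unique right coset $Hu_{j(g)}$. Your added remarks on membership in $RH$ and on uniqueness (from $RG = \bigoplus_j RH\,u_j$) are correct and only make explicit what the paper leaves implicit.
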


Therefore, there exists $\widetilde{\sgn}(g) \in \{ -1, 1 \}$ such that 
$$
\left( \det{} \circ \psi \circ R_{U} \right)(g) = \widetilde{\sgn}(g) \widetilde{V}_{G \rightarrow H/K}(g) 
$$
and 
$\widetilde{V}_{G \rightarrow H/K}$ is invariant under a change of coset representatives of $H$ in $G$. 
We have properties~$(1)$ and $(2)$.

Since $T$ is a complete set of left coset representatives of $H$ in $G$, 
we can take a complete set of $T^{-1} = \{ t_{1}^{-1}, t_{2}^{-1}, \ldots, t_{m}^{-1} \}$ of right coset representatives of $H$ in $G$. 
Therefore, 
\begin{align*}
R_{T^{-1}}(\alpha)_{i j} 
&= \sum_{g \in G} \dot{\chi} \left( t_{i}^{-1} g (t_{j}^{-1})^{-1} \right) x_{g} t_{i}^{-1} g (t_{j}^{-1})^{-1} \\ 
&= L_{T}(\alpha)_{i j}. 
\end{align*}
We obtain property~$(3')$. 
As a result, 
$$
\left( \det{} \circ \psi \circ R_{U} \right)(g) = \left( \det{} \circ \psi \circ L_{T} \right)(g). 
$$
Therefore, we have 
\begin{align*}
\widetilde{\sgn}(g) = \sgn(g), \quad \widetilde{V}_{G \rightarrow H/K} = V_{G \rightarrow H/K}. 
\end{align*}
We obtain property~($3$). 
This completes the proof of Theorem~$\ref{thm:0}$.

\clearpage

\thanks{\bf{Acknowledgments}}
I am deeply grateful to Prof. T\^oru Umeda, Prof. Hiroyuki Ochiai, Prof. Minoru Itoh and Prof. Hideaki Morita who provided helpful comments and suggestions. 
In particular, T\^oru Umeda gave me the initial motivation for undertaking this study. 
I would also like to thank Cid Reyes for comments and suggestions. 
This work was supported by a grant from the Japan Society for the Promotion of Science (JSPS KAKENHI Grant Number 15J06842).

\medskip
\begin{flushleft}
Naoya Yamaguchi\\
Center for Co-Evolutional Social Systems\\
Kyushu University\\
Nishi-ku, Fukuoka 819-0395 \\
Japan\\
n-yamaguchi@math.kyushu-u.ac.jp
\end{flushleft}

\end{document}